\def\qed{\nopagebreak\hfill{\rule{4pt}{7pt}}}
\def\proof{\noindent {\it{Proof.} \hskip 2pt}}
\newtheorem{theo}{Theorem}[section]
\newtheorem{lemm}[theo]{Lemma}
\newtheorem{prop}[theo]{Proposition}
\numberwithin{equation}{section}
\newdimen\Squaresize \Squaresize=11pt
\newdimen\Thickness \Thickness=0.7pt
\def\Square#1{\hbox{\vrule width \Thickness
   \vbox to \Squaresize{\hrule height \Thickness\vss
    \hbox to \Squaresize{\hss#1\hss}
   \vss\hrule height\Thickness}
\unskip\vrule width \Thickness} \kern-\Thickness}
\def\Vsquare#1{\vbox{\Square{$#1$}}\kern-\Thickness}
\def\moins{\raise 1pt\hbox{{$\scriptstyle -$}}}
\title{A combinatorial identity on Galton-Watson process}
\author{Linyuan Lu
\thanks{University of South Carolina, Columbia, SC 29208,
({\tt lu@math.sc.edu}). This author was supported in part by NSF
grant DMS 1300547 and ONR grant N00014-13-1-0717.}
\and
Arthur L.B. Yang
\thanks{Center for Combinatorics, LPMC-TJKLC, Nankai University,
Tianjin 300071, P. R. China ({\tt yang@nankai.edu.cn}).
}
}
\begin{document}
\maketitle

% \begin{center}
% {\large \bf  A combinatorial identity on Galton-Watson process}
% \end{center}

% \begin{center}
% Linyuan Lu$^1$ and Arthur L.B. Yang$^2$\\[6pt]

% $^1$Department of Mathematics, University of South Carolina\\[3pt]
% Columbia, SC 29208, United States\\[6pt]

% $^2$Center for Combinatorics, LPMC-TJKLC, Nankai University\\[3pt]
% Tianjin 300071, P. R. China

% Email:  $^{1}${\tt lu@math.sc.edu}, $^{2}${\tt yang@nankai.edu.cn}
% \end{center}

%\noindent\textbf{Abstract.}

% \begin{abstract}
%   For any integer $m>0$ and any real $c$, let
% $f(m,c)=\sum_{k=0}^{\infty} (km+1)^{k-1} c^k e^{-(km+1)c} / k!.$
% The identity $f(m,c/m)^m=f(1,c)$ arises in the random
% graph theory. Here we present two elementary proofs of this identity:
% a pure combinatorial proof and a
% power-serial proof.
% \end{abstract}

\begin{abstract}
 Let $f(m,c)=\sum_{k=0}^{\infty} (km+1)^{k-1} c^k e^{-c(km+1)/m} / (m^kk!)$.
For any positive integer $m$ and positive real $c$,
the identity $f(m,c)=f(1,c)^{1/m}$ arises in the random
graph theory. In this paper, we present two elementary proofs of this identity:
a pure combinatorial proof and a
power-serial proof. We also proved
that this identity holds for
any positive reals $m$ and $c$. 
\end{abstract}

\section{Introduction}
Erd\H{o}s and R\'enyi wrote a series of remarkable papers on the
evolution of random graphs around 1960 \cite{ER1,ER2}.
Erd\H{o}s and R\'enyi first considered the uniform model $G_{n,f}$ 
where a random graph $G$ is selected 
uniformly among all graphs with $n$ vertices and $f$
edges. It is remarkable that phase transition happens as $f$ passes
through the threshold $f\approx n/2$. It is convenient to write 
$f=cn/2$ where $c$ is the average degree. When $0<c<1$, almost surely
all connected components are of order $O(\ln n)$.  When $c=1$, the largest component has the order of $\Theta(n^{2/3})$.
When $c>1$,  almost
surely there is a unique giant component of order $(g(c)+o(1))n$, where
$$g(c)=1-\sum_{k=0}^\infty\frac{(k+1)^{k-1}c^{k}}{k!}e^{-(k+1)c}.$$
Here the term $\frac{(k+1)^{k-1}c^{k}}{k!}e^{-(k+1)c}$ in the summation
above is the probability that a random vertex belongs to a connected
component of order $k+1$.

For a positive integer $m$ and a positive real $c$, define
$$f(m,c)=\sum_{k=0}^{\infty} \frac{(km+1)^{k-1} c^k}{m^k k!} e^{-c(km+1)/m}.$$
Observe that $f(1,c)=1-g(c)$. So $f(1,c)$ measures the
probability that a vertex $v$ is in small components. This probability
can be computed using the {\em Branching
Process}, which reveals the neighbors $v$, and the neighbors of its
neighbors iteratively, until the whole component containing $v$ is
revealed.  This branching process can be coupled by the {\em Poisson
  Process}, a special case of a Galton-Watson process. In general,
a Galton-Watson process is a stochastic process $\{Y_t\}_{t=0}^{\infty}$ which
evolves according to the recursive formula
\begin{equation}
  \label{eq:gw}
Y_{t} = Y_{t-1} -1 + Z_t,  \quad \mbox{ for } t\geq 1,
\end{equation}
where $Y_0 = 1$. The {\em Poisson Process} is a special Galton-Watson
process with $Z_t$ ensembling the Poisson distribution
($\Pr(Z_t=k)=c^ke^{-c}/k!$ for $k=0,1,2\ldots$).
For simplicity, let $z=f(1,c)$, be the probability that the Poisson
Process stops after finite steps. Suppose that the root node has $k$
children nodes. The subprocesses starting at each child node are
independent to each other, and are identical to the main process on
the distribution. Thus the probability that a subprocess terminates in finite steps
is also $z$. By the independency, all $k$ subprocesses terminates in
some finite steps is exactly $z^k$. This leads the following recursive
formula for $z$:
\begin{align}
\nonumber
  z&=\sum_{k=0}^\infty \Pr(Z_1=k)z^k\\
\nonumber
&=\sum_{k=0}^\infty \frac{c^ke^{-c}}{k!}z^k\\
&=e^{-c(1-z)}.
\label{eq:1}
\end{align}

In summary, we have the following proposition from the random graph theory.
\begin{prop}
If $0\leq c\leq 1$, then
 $f(1,c)\equiv 1$. If $c>1$, then 
 $f(1,c)$ is the
unique root of Equation \eqref{eq:1} in the interval $(0,1)$.
\end{prop}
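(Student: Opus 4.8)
The plan is to treat Equation \eqref{eq:1} as a fixed-point problem and to combine a convexity analysis of its right-hand side with an identification of which fixed point the series $f(1,c)$ actually equals. Write $\phi(z)=e^{-c(1-z)}$, so that \eqref{eq:1} reads $z=\phi(z)$. I would first record the elementary facts $\phi(1)=1$, $\phi(0)=e^{-c}\in(0,1)$, $\phi'(z)=c\,\phi(z)>0$, and $\phi''(z)=c^{2}\phi(z)>0$; thus $\phi$ is strictly increasing and strictly convex on $[0,1]$. Setting $g(z)=\phi(z)-z$, strict convexity forces $g$ to have at most two zeros, one of which is always $z=1$. Since $g'(1)=c-1$, the sign of $c-1$ controls the picture: when $c\le1$ the increasing function $g'$ satisfies $g'(z)<g'(1)\le0$ on $[0,1)$, so $g$ is strictly decreasing and $z=1$ is its only zero in $[0,1]$; when $c>1$ we have $g'(1)>0$, hence $g<0$ just to the left of $1$, while $g(0)=e^{-c}>0$, so the intermediate value theorem produces a zero $z_0\in(0,1)$, and convexity makes it the unique interior one.

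Next I would locate $f(1,c)$ among these roots. Its terms are the probabilities of the mutually exclusive events that the component containing the root (equivalently the total progeny) has order exactly $k+1$, so the sum lies in $[0,1]$; moreover $f(1,c)$ satisfies \eqref{eq:1} by the branching-process computation already carried out. For $c\le1$ this finishes the argument at once: the only root of \eqref{eq:1} in $[0,1]$ is $1$, so $f(1,c)=1$.

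The real work is the supercritical case $c>1$, where \eqref{eq:1} has the two roots $z_0\in(0,1)$ and $1$, and I must show $f(1,c)=z_0$, i.e. that $f(1,c)<1$. I expect this to be the main obstacle. My preferred route is to rewrite the series in closed form via the tree function $T(x)=\sum_{n\ge1}\frac{n^{n-1}}{n!}x^{n}$, which satisfies $T(x)=x\,e^{T(x)}$ and whose principal branch maps $[0,1/e]$ bijectively onto $[0,1]$. A short reindexing gives $f(1,c)=\tfrac1c\,T(ce^{-c})$, and since $ce^{-c}\le 1/e$ for all $c>0$ the right-hand side is well defined; substituting the functional equation of $T$ recovers \eqref{eq:1}. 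The point is that the \emph{principal} branch selects the smaller preimage: for $c>1$ one has $T(ce^{-c})=c^{\ast}$ with $c^{\ast}\in(0,1)$ and $c^{\ast}\ne c$, whence $f(1,c)=c^{\ast}/c\in(0,1)$, which is precisely the interior root $z_0$. Equivalently, and more in the spirit of the branching-process derivation, one may identify $f(1,c)$ with the extinction probability $\lim_{n\to\infty}\phi^{(n)}(0)$ and observe that, because $\phi(z)>z$ on $[0,z_0)$ with $\phi$ increasing, the iterates $\phi^{(n)}(0)$ increase monotonically to the smallest fixed point $z_0$.

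The only technical care needed is at the critical value $c=1$, where the argument $ce^{-c}=1/e$ sits exactly on the boundary of the disk of convergence of $T$; there I would invoke Abel's theorem (the series $\sum_{n}n^{n-1}/n!\,x^{n}$ converges at $x=1/e$ to the boundary value $T(1/e)=1$) to justify continuity and the evaluation $f(1,1)=1$, consistent with the $c\le1$ branch.
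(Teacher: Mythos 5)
Your argument is correct, and it takes a genuinely different (and more self-contained) route than the paper, which in fact offers no proof of this proposition at all: it is stated as a known fact from random graph theory, preceded only by a heuristic branching-process derivation of Equation \eqref{eq:1} in which $f(1,c)$ is \emph{identified} with the extinction probability of the Poisson Galton--Watson process without justification (one would need to know that the $k$-th term of the series is the probability that the total progeny equals $k+1$). Your proof supplies both missing ingredients. The convexity analysis of $\phi(z)=e^{-c(1-z)}$ correctly classifies the roots of \eqref{eq:1} in $[0,1]$ (only $z=1$ when $c\le 1$; exactly one additional root $z_0\in(0,1)$ when $c>1$), and the reindexing $f(1,c)=\tfrac1c T(ce^{-c})$ with the tree function $T(x)=\sum_{n\ge1}\frac{n^{n-1}}{n!}x^n$ (note $T(x)=x\,g(1,x)$ in the paper's notation) does double duty: substituting $T(x)=xe^{T(x)}$ shows that $f(1,c)$ solves \eqref{eq:1} --- essentially the $m=1$ case of the paper's Lemma \ref{l2}, proved there by Lagrange inversion --- while the fact that the principal branch of $T$ maps $[0,1/e]$ into $[0,1]$ pins down \emph{which} root $f(1,c)$ is, namely $T(ce^{-c})=\min(c,c^\ast)$, giving $f(1,c)=1$ for $c\le1$ and $f(1,c)=c^\ast/c=z_0$ for $c>1$. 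Your use of Abel's theorem at $c=1$ mirrors the care taken in the paper's Lemma 2.1 (absolute convergence at $x=1/e$ via Stirling). The only step I would spell out is why the power series for $T$ stays on the lower branch of $te^{-t}=x$ throughout $[0,1/e)$: this follows from $T(0)=0$, continuity of $T$, and the fact that the two branches meet only at $x=1/e$. Your alternative suggestion, the monotone iteration $\phi^{(n)}(0)\uparrow z_0$, identifies the extinction probability with the smallest fixed point but still leaves the equality of $f(1,c)$ with that extinction probability unproved, so the tree-function route is the one to keep.
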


Erd\H{o}s and R\'enyi's classical work on random graphs can be
generalized to random hypergraphs. Let $H^r_{n,f}$ be the
random $r$-uniform hypergraph such that each $r$-uniform hypergraph
with $n$ vertices and $f$ edges is selected with the same probability.
The phase transition of the random hypergraph $H^r_{n,f}$ was
analyzed by Schmidt-Pruzan-Shamir \cite{PS} and  Karo\'nski-{\L}uczak
\cite{KL}. Namely, they proved
that for $f< \tfrac{n}{r(r-1)}$, almost surely the largest connected component
in $H^r_{n,f}$ is of size $O(\ln n)$; for $f \sim \tfrac{n}{r(r-1)}$, almost
surely the largest connected component is of order $\Theta(n^{2/3})$; for
$f>\tfrac{n}{r(r-1)}$, almost surely there is a unique giant connected
component.
Write $f=c\frac{r(r-1)}{n}$ so that $c=1$ is the threshold.
Then the order of the unique giant connected component is
$(g_r(c)+o_n(1))n$, where
$$g_r(c)=1-\sum_{k=0}^{\infty} \frac{(k(r-1)+1)^{k-1} c^k}{(r-1)^k k!} e^{-(kr-k+1)c}.$$
Setting $m=r-1$, we observe that $g_r(c)=1-f(m,c)$. So $f(m,c)$ is the
probability that a vertex $v$ belongs to small components in
$H^r_{n,f}$ (with $r=m+1$ and $f=c\tfrac{n}{(m+1)m}$).

Similarly, the branching process on $H^r_{n,f}$ can be coupled by
an {\em $m$-fold Poisson Process}: a special  Galton-Watson
process with $Z_t$ ensembling the following $m$-fold Poisson distribution
with the expected value $c$:
$$\Pr(Z_t=k)=
\begin{cases}
e^{-c}\frac{(c/m)^{k/m}}{(k/m)!}   & \mbox{ if } k \mbox{ is a multiple of
  } m\\
0 & \mbox{ otherwise.}
\end{cases}
$$
Let $y=f(m,c)$. A similarly argument shows that $y$ satisfies the
following recursive formula:
\begin{align}
\nonumber
  y&=\sum_{k=0}^\infty \Pr(Z_1=k)y^k\\
\nonumber
&=\sum_{j=0}^\infty \Pr(Z_1=mj)y^{mj}\\
\nonumber
&=\sum_{j=0}^\infty e^{-c}\frac{(c/m)^{j}}{j!}y^{mj}\\
\label{eq:2}
&=e^{-c(1-y^m)/m}.
\end{align}

This leads the following proposition for $f(m,c)$:
\begin{prop}
Suppose that $m$ is a positive integer.
If $0\leq c\leq 1$, then
 $f(m,c)\equiv 1$. If $c>1$, then 
 $f(m,c)$ is the
unique root of Equation \eqref{eq:2} in  the interval $(0,1)$.
\end{prop}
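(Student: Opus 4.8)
The plan is to realize $f(m,c)$ as the extinction probability of the $m$-fold Poisson Galton--Watson process described above, and then to appeal to the classical extinction dichotomy. Since the offspring variable $Z_t$ is $m$ times a Poisson$(c/m)$ random variable, its probability generating function is $\varphi(s)=E[s^{Z_t}]=e^{(c/m)(s^m-1)}=e^{-c(1-s^m)/m}$, which is exactly the right-hand side of \eqref{eq:2}, and its mean is $\varphi'(1)=c$. Thus \eqref{eq:2} is precisely the fixed-point equation $y=\varphi(y)$ for the extinction probability, and the proposition reduces to identifying $f(m,c)$ with that extinction probability and then locating the roots of $y=\varphi(y)$ in $[0,1]$.

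The key step is to interpret the individual summand of $f(m,c)$ combinatorially. Writing $Z_t=mW_t$ with the $W_t$ independent Poisson$(c/m)$ variables, I would invoke the hitting-time (Otter--Dwass) formula: the total progeny $N$ of the process, started from a single root, satisfies $\Pr(N=n)=\frac{1}{n}\Pr(Z_1+\cdots+Z_n=n-1)$. Since each $Z_t$ is a multiple of $m$, this probability vanishes unless $n=km+1$; and for $n=km+1$ the sum $W_1+\cdots+W_n$ is Poisson$(nc/m)$ and must equal $k$. A short computation then rewrites $\Pr(N=km+1)$ as $\frac{(km+1)^{k-1}c^k}{m^k k!}e^{-c(km+1)/m}$, exactly the $k$-th term of $f(m,c)$. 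Summing over $k\ge 0$ gives $f(m,c)=\Pr(N<\infty)$, namely the extinction probability $q$ of the process.

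With this identification the conclusion follows from standard Galton--Watson theory: $q$ is the smallest root of $y=\varphi(y)$ in $[0,1]$, and $q=1$ if and only if the mean $\varphi'(1)=c$ does not exceed $1$. This yields $f(m,c)\equiv 1$ for $0\le c\le 1$ at once. For $c>1$ I would use that $\varphi$ is convex on $[0,1]$ with $\varphi(0)=e^{-c/m}>0$, $\varphi(1)=1$ and $\varphi'(1)=c>1$: convexity permits at most two roots of $y=\varphi(y)$, one of which is $y=1$, while $\varphi'(1)>1$ forces a second root $q<1$; since moreover $q=\varphi(q)\ge\varphi(0)>0$, this root lies in $(0,1)$ and is the only root there. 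Hence $f(m,c)=q$ is the unique root of \eqref{eq:2} in $(0,1)$.

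The main obstacle is the second step, namely justifying that the analytically defined summand really equals the tree-size probability $\Pr(N=km+1)$; the cleanest route is the Otter--Dwass formula, which itself rests on the cycle lemma. If one prefers a fully self-contained argument, one can instead verify directly---using the power-series manipulations developed later in the paper---that the sum defining $f(m,c)$ satisfies \eqref{eq:2}, after which the same convexity analysis identifies it with $q$. Everything beyond this step is a routine application of well-known facts about probability generating functions and extinction probabilities.
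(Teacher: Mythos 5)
Your argument is correct, and it is in the same spirit as the paper's (both identify $f(m,c)$ with the extinction probability of the $m$-fold Poisson Galton--Watson process and read \eqref{eq:2} as the fixed-point equation $y=\varphi(y)$), but you justify the key identification differently and more rigorously. The paper obtains $f(m,c)=\Pr(\text{extinction})$ only indirectly, by quoting the random-hypergraph results of Schmidt-Pruzan--Shamir and Karo\'nski--{\L}uczak for the component-size probabilities and then coupling the neighborhood exploration with the branching process; it then derives \eqref{eq:2} by conditioning on the first generation and simply asserts the dichotomy. You instead apply the Otter--Dwass hitting-time formula $\Pr(N=n)=\tfrac1n\Pr(Z_1+\cdots+Z_n=n-1)$ to match each summand $\frac{(km+1)^{k-1}c^k}{m^kk!}e^{-c(km+1)/m}$ with $\Pr(N=km+1)$ directly (the computation checks out: $n=km+1$ forces $W_1+\cdots+W_n=k$ with $W_i\sim\mathrm{Poisson}(c/m)$), and you supply the convexity argument that the paper omits for why $y=\varphi(y)$ has exactly one root in $(0,1)$ when $c>1$. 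This buys a self-contained proof that needs no random-graph input, at the price of importing Otter--Dwass (or the cycle lemma). Two small points: the ``at most two fixed points'' step needs \emph{strict} convexity of $\varphi$, which holds since $\Pr(Z=2m)>0$; and your proposed fallback route (verify \eqref{eq:2} via the paper's Lagrange-inversion lemma, then use convexity) would additionally require an a priori bound $f(m,c)\le 1$ to rule out the fixed point of $\varphi$ lying in $(1,\infty)$ when $c<1$, a point your main probabilistic route handles automatically.
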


% The probability that the process terminates in finite steps
%  is equal to $1-g_r(c)$ and satisfies the following equation:
% \begin{equation}
%   \label{eq:2}
%   z=e^{-c(1-z^{r-1})}.
% \end{equation}

% For any integer $m\geq 1$ and any real $c>0$, define
% $$f(m,c)=\sum_{k=0}^{\infty} (km+1)^{k-1} c^k e^{-(km+1)c} / k!.$$
% We have $g(c)=1-f(1,c)$,  $g_{m+1}(c)=1-f(m,c)$, and $f(m,c)$ satisfies
% \begin{equation}
%   \label{eq:3}
%   z=e^{-c(1-z^m)}.
% \end{equation}
From equation \eqref{eq:2}, we have
\begin{equation}
  \label{eq:3}
y^m=e^{-c(1-y^m)}.  
\end{equation}
Setting $z=y^m$, then $y$ satisfies Equation \eqref{eq:1}.
Thus we have the following identity.
\begin{theo}\label{main}
For any integer $m\geq 1$ and any real $c>0$, we have
$$f(m,c)^m=f(1,c).$$
\end{theo}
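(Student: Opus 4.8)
The plan is to sidestep the Galton--Watson interpretation entirely and read $f(m,c)$ as a specialization of the generalized exponential series
$$\mathcal{E}_t(x)=\sum_{k\ge 0}\frac{(tk+1)^{k-1}}{k!}\,x^k,$$
the tree-type generating function satisfying $\mathcal{E}_t(x)=\exp\!\bigl(x\,\mathcal{E}_t(x)^t\bigr)$. The first step is a direct rewriting: since $e^{-c(km+1)/m}=e^{-c/m}\,(e^{-c})^k$, pulling the constant factor out of the sum gives
$$f(m,c)=e^{-c/m}\sum_{k\ge 0}\frac{(km+1)^{k-1}}{k!}\Bigl(\tfrac{c}{m}e^{-c}\Bigr)^k=e^{-c/m}\,\mathcal{E}_m(x),\qquad x:=\tfrac{c}{m}e^{-c},$$
so that $f(m,c)^m=e^{-c}\,\mathcal{E}_m(x)^m$. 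This reduces the theorem to understanding powers of $\mathcal{E}_m$.

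The crux of the argument is the power identity
$$\mathcal{E}_t(x)^r=\sum_{k\ge 0}\frac{r\,(tk+r)^{k-1}}{k!}\,x^k,$$
which I would derive from $\mathcal{E}_t(x)=\exp(x\,\mathcal{E}_t(x)^t)$ by Lagrange inversion: setting $w=x\,\mathcal{E}_t(x)^t$ turns the functional equation into $w=x\,e^{tw}$, and applying the Lagrange--B\"urmann formula to $\mathcal{E}_t^r=e^{rw}$ yields $[x^k]\,e^{rw}=\tfrac1k[w^{k-1}]\,r\,e^{(r+tk)w}=r(tk+r)^{k-1}/k!$. This is the one genuinely nontrivial step; everything else is bookkeeping. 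Combinatorially the same identity is an Abel-type binomial identity counting forests of $r$ rooted trees, and unpacking that bijection would give the alternative ``pure combinatorial'' proof, but Lagrange inversion is the shortest route.

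With the power identity in hand the theorem collapses. Taking $r=t=m$ and using $(mk+m)^{k-1}=m^{k-1}(k+1)^{k-1}$, I would compute
$$\mathcal{E}_m(x)^m=\sum_{k\ge 0}\frac{m\cdot m^{k-1}(k+1)^{k-1}}{k!}\,x^k=\sum_{k\ge 0}\frac{(k+1)^{k-1}}{k!}\,(mx)^k.$$
Since $mx=c\,e^{-c}$, the right-hand side is exactly $\mathcal{E}_1(c\,e^{-c})$, and therefore
$$f(m,c)^m=e^{-c}\,\mathcal{E}_m(x)^m=e^{-c}\,\mathcal{E}_1(c\,e^{-c})=f(1,c).$$

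Finally I would settle convergence so that the formal manipulations become a genuine equality of real numbers: $\mathcal{E}_t$ has radius of convergence $1/(te)$, and the chosen argument satisfies $mx=c\,e^{-c}\le e^{-1}$ with equality only at $c=1$, so $x$ lies strictly inside the disk of convergence for every $c\neq 1$ and on its boundary (where the series still converges absolutely, the coefficients decaying like $k^{-3/2}$) at $c=1$. Hence the rearrangement and the power identity are valid for all $c>0$, and no separate treatment of $c\le 1$ and $c>1$ is needed. I expect the main obstacle to be a clean, self-contained justification of the power identity for a general exponent $r$; once that is in place, the identical Lagrange-inversion computation goes through verbatim for non-integer $m$, which is precisely what is required for the promised extension to arbitrary positive reals $m$ and $c$.
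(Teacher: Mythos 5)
Your proof is correct, and it differs from both of the paper's arguments at the decisive step. The paper's combinatorial proof reduces the claim to the coefficient identity $\sum_{k_1+\cdots+k_m=n}\binom{n}{k_1,\ldots,k_m}\prod_{i}(k_im+1)^{k_i-1}=(n+1)^{n-1}m^n$ and proves it by counting edge-colored labeled trees (rooted $m$-forests); its power-series proof applies Lagrange inversion to the functional equation $z=e^{-c/m}e^{cz^m/m}$ in the variable $y=e^{-c/m}$ and then passes to $z^m$. You instead invert $w=xe^{tw}$ in the variable $x$ and extract the general power formula $\mathcal{E}_t(x)^r=\sum_{k\ge 0}r(tk+r)^{k-1}x^k/k!$, from which $\mathcal{E}_m(x)^m=\mathcal{E}_1(mx)$ falls out via the single simplification $(mk+m)^{k-1}=m^{k-1}(k+1)^{k-1}$. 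This buys you several things: it yields the paper's Lemma 2.2 (and, with $r$ and $t$ decoupled, essentially Lemma 2.3 as well) as a one-line corollary rather than a separate counting argument; it sidesteps a uniqueness issue lurking in the paper's power-series proof (the equation $w=e^{-c(1-w)}$ has two roots in $(0,1]$ when $c>1$, so identifying $z^m$ with $f(1,c)$ there needs an extra word, whereas your computation is a direct equality of convergent series); and, as you note, the same Lagrange--B\"urmann computation with non-integer exponents is exactly what the extension to real $m$ requires, where the paper must redo the combinatorics for rational $m=p/q$. Your treatment of convergence, including absolute convergence on the boundary case $c=1$ from the $k^{-3/2}$ decay, matches the paper's Lemma 2.1 and is what legitimately turns the formal power-series identity into an identity of real numbers for all $c>0$.
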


The goal of this paper is to give two elementary proofs of  this identity without using any
random graph theory. In section 2, we will give a pure combinatorial
proof. In section 3, we will give a power-series proof. In the last
section, we will extend this identity to any real $m>0$.

\section{Useful Lemmas}
\subsection{Convergence of $f(m,c)$}
\begin{lemm}
  For any positive reals $m$ and $c$, the series in the definition of
  $f(m,c)$ converges. Moreover, $f(m,c)$ is a continuous function.
\end{lemm}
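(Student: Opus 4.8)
The plan is to bound the $k$-th summand of the series uniformly on compact subsets of the domain $\{(m,c):m>0,\ c>0\}$ by a summable sequence that does not depend on $(m,c)$, and then to invoke the Weierstrass $M$-test. This does both jobs at once: it yields pointwise convergence (so that $f(m,c)$ is well defined) and, since each summand is manifestly continuous in $(m,c)$, the uniform convergence upgrades this to continuity of the sum $f$.

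Write $a_k(m,c)=\dfrac{(km+1)^{k-1}c^k}{m^k k!}\,e^{-c(km+1)/m}$ for the general term. The term $a_0=e^{-c/m}$ is harmless, so I fix $k\ge 1$. The central tool is Stirling's lower bound $k!\ge \sqrt{2\pi k}\,(k/e)^k$, which is elementary and valid for all $k\ge 1$. Feeding this in and writing $(km+1)^{k-1}=(km+1)^k/(km+1)$, the powers reorganize as
\[
\frac{(km+1)^k}{m^k k^k}=\Bigl(1+\tfrac{1}{mk}\Bigr)^k\le e^{1/m},
\qquad
c^k e^k e^{-ck}=(c\,e^{1-c})^k,
\]
where the first inequality uses $\ln(1+x)\le x$. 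Combining these I expect to reach the clean estimate
\[
a_k(m,c)\ \le\ \frac{e^{(1-c)/m}}{(km+1)\sqrt{2\pi k}}\,\bigl(c\,e^{1-c}\bigr)^k .
\]
The decisive elementary fact is $c\,e^{1-c}=e\cdot c e^{-c}\le 1$ for every $c>0$, with equality \emph{only} at $c=1$, since $ce^{-c}$ is maximized at $c=1$ with value $e^{-1}$.

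The main obstacle is precisely the critical case $c=1$. For $c\neq 1$ one has $c\,e^{1-c}<1$ and the bound decays geometrically, so convergence is immediate; but at $c=1$ the geometric factor degenerates to $1$, and one must lean on the polynomial factor $1/\bigl((km+1)\sqrt{2\pi k}\bigr)\le 1/\bigl(m\sqrt{2\pi}\,k^{3/2}\bigr)$ coming from the $\sqrt{2\pi k}$ in Stirling. This is why the sharper Stirling bound is needed: the cruder $k!\ge(k/e)^k$ would only produce a $\sum 1/k$ tail, which diverges at $c=1$. With the sharper bound, one gets $a_k\le C\,k^{-3/2}$ with $\sum_k k^{-3/2}<\infty$, proving pointwise convergence. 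Finally, on any compact $K$ with $m\ge m_1>0$ one has $e^{(1-c)/m}\le e^{1/m_1}$ and $1/m\le 1/m_1$, so $a_k(m,c)\le \tfrac{e^{1/m_1}}{m_1\sqrt{2\pi}}\,k^{-3/2}=:M_k$ for all $(m,c)\in K$, with $\sum_k M_k<\infty$. The Weierstrass $M$-test then gives uniform convergence on $K$, and hence continuity of $f$ on the whole domain, as $K$ is arbitrary.
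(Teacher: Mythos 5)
Your proof is correct and rests on the same two ingredients as the paper's: Stirling's bound to extract the $k^{-3/2}$ decay that saves the critical case $c=1$ (equivalently $ce^{-c}=1/e$), and the elementary fact that $ce^{-c}\le 1/e$ for all $c>0$. The only difference is organizational: the paper factors $f(m,c)=e^{-c/m}g(m,ce^{-c})$ and argues via the radius of convergence of the power series $g(m,x)$, while you bound the terms of $f$ directly and are somewhat more explicit about deducing joint continuity in $(m,c)$ via the Weierstrass $M$-test on compact subsets, a point the paper treats tersely.
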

\proof
We can rewrite $f(m,c)$ as follows:
\begin{align}
f(m,c) & = \sum_{k=0}^{\infty} \frac{(km+1)^{k-1} c^k}{m^k k!} e^{-(kc+c/m)} \\
           & = e^{-c/m}\sum_{k=0}^{\infty} \frac{(km+1)^{k-1} c^k}{m^kk!} e^{-kc} \\
           & = e^{-c/m}\sum_{k=0}^{\infty}\frac{(km+1)^{k-1}}{m^k k!}(ce^{-c})^k.
\end{align}
Let $g(m,x)$ be the following power series:
\begin{align}
g(m,x)&=\sum_{k=0}^{\infty} (km+1)^{k-1}/(m^k k!)x^k.
\end{align}
One can easily show that the radius of convergence of $g(m,x)$ is
$\frac{1}{e}$.
Also using the Sterling formula $k!=(1+o_k(1))\sqrt{2\pi k}k^k/e^k$,
one can show that $g(m,x)$ also absolutely converges at $x=\pm
\frac{1}{e}$. Thus $g(m,x)$ converges for all $x\in [-1/e, 1/e]$.

Note that for all real $c>0$, $ce^{-c}\leq 1/e$. This implies the
convergence of the infinite series in $f(m,c)$. The continuity of
$f(m,c)$ is deduced from the the continuity of $g(m,x)$ for all $m>0$
and all $x\in [-1/e,1/e]$.
\qed

\subsection{Labeled trees and rooted $m$-forest}
Cayley's formula states that for any positive integer $n$, the number of trees on $n$ labeled vertices is $n^{n-2}$.

A rooted forest on $[n]$ is a graph on the vertex set $\{1,2,\ldots,n\}$ for which every connected component is a rooted tree. It is well-known that there are $(n+1)^{n-1}$ rooted forests on $[n]$.

A rooted $m$-forest is a rooted forest on vertices $\{1,2,\ldots,n\}$ with edges colored with the colors $0,1,\ldots,m-1$. There is no additional restriction on the possible colors of the edges. This definition is due to Stanley, see \cite{yan1997, stanley1998}. It is easy to see by standard enumerative arguments that there are $(mn+1)^{n-1}$ rooted $m$-forests on $[n]$.

\begin{lemm} \label{key-lemma} For any two positive integers $m$ and $n$, we have
\begin{align}
\sum_{k_1+k_2+\cdots+k_m=n}\binom{n}{k_1,k_2,\ldots,k_m}\prod_{i=1}^m(k_im+1)^{k_i-1}=(n+1)^{n-1}m^n.
\end{align}
\end{lemm}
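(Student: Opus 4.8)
The plan is to prove that both sides of the identity enumerate the same family of colored trees. First I would unpack the left-hand side using the counts already recorded in this subsection. By the stated formula, $(k_i m+1)^{k_i-1}$ is the number of rooted $m$-forests on a $k_i$-element set, where the convention $(0\cdot m+1)^{-1}=1$ correctly records the single empty forest on the empty set; meanwhile $\binom{n}{k_1,\dots,k_m}$ counts the ways of splitting $[n]$ into an ordered sequence of blocks $(B_1,\dots,B_m)$ with $|B_i|=k_i$. Hence the left-hand side is precisely the number of tuples $(B_1,\dots,B_m;\Phi_1,\dots,\Phi_m)$ in which $(B_1,\dots,B_m)$ is an ordered set partition of $[n]$ into $m$ (possibly empty) blocks and $\Phi_i$ is a rooted $m$-forest on $B_i$. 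For the right-hand side I would read $(n+1)^{n-1}m^n$ as the number of rooted trees on the vertex set $\{0,1,\dots,n\}$ rooted at $0$ and carrying an $m$-coloring of their edges: there are $(n+1)^{n-1}$ such rooted trees, each has exactly $n$ edges, and coloring those $n$ edges with the $m$ available colors contributes the factor $m^n$.

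The heart of the proof is a bijection between these two families. Given a tuple $(B_1,\dots,B_m;\Phi_1,\dots,\Phi_m)$, I would take the disjoint union of the rooted $m$-forests $\Phi_1,\dots,\Phi_m$ on $[n]$, adjoin a new vertex $0$ as a global root, join $0$ by a fresh edge to the root of every component tree of every $\Phi_i$, and color each such fresh edge by the block index $i$ (after identifying the block index set $\{1,\dots,m\}$ with the color set $\{0,\dots,m-1\}$). The original internal edges retain their colors from the $\Phi_i$. The result is a rooted tree on $\{0,1,\dots,n\}$ rooted at $0$ with all $n$ of its edges colored, i.e. an element of the right-hand family. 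The inverse map reads a colored rooted tree $T$ rooted at $0$, examines each child $r$ of $0$, and assigns the subtree hanging below $r$ to the block $B_i$ where $i$ is the color of the edge $\{0,r\}$; deleting $0$ then recovers the forests $\Phi_i$ and the ordered partition $(B_1,\dots,B_m)$.

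The steps requiring care, and the only genuine obstacle, are verifying that these two maps are mutually inverse and that the edge bookkeeping balances. Each vertex of $[n]$ lies in exactly one subtree below $0$, so the recovered ordered partition is well defined; writing $c_i$ for the number of component trees of $\Phi_i$, the $\sum_i(k_i-c_i)$ internal edges together with the $\sum_i c_i$ root-edges account for all $n$ edges of $T$, all of which are colored; and the color of a root-edge ranges over exactly the $m$ block indices, matching the $m$ admissible edge colors, so no colored tree is produced twice or omitted. The delicate point where a careless count could fail is the treatment of empty blocks through the empty-forest convention, so I would check that case explicitly.

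As an alternative I would record a generating-function proof. Writing $A(x)=\sum_{k\ge 0}(km+1)^{k-1}x^k/k!$ for the exponential generating function of rooted $m$-forests, the left-hand side equals $n!\,[x^n]A(x)^m$ by the multinomial expansion. Since a rooted $m$-forest is a set of rooted $m$-trees, the exponential formula gives $A(x)=\exp(R_m(x))$ where $R_m$ enumerates rooted $m$-trees and satisfies $R_m(x)=x\exp(mR_m(x))$; comparing with the classical tree equation $R(y)=y\exp(R(y))$ yields $mR_m(x)=R(mx)$, hence $A(x)^m=\exp(R(mx))=F(mx)$, where $F(x)=\sum_n(n+1)^{n-1}x^n/n!=\exp(R(x))$ is the exponential generating function of rooted forests. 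Extracting $n!\,[x^n]$ from $A(x)^m=F(mx)$ then gives $(n+1)^{n-1}m^n$ directly, completing the identity.
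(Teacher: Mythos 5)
Your bijection is exactly the paper's argument, just run in the opposite direction: the paper starts from the $(n+1)^{n-1}m^n$ edge-colored labeled trees on $\{0,1,\dots,n\}$, roots them at $0$, and groups the subtrees of $0$ by the color of the edge joining them to $0$ to recover the ordered tuple of rooted $m$-forests, which is precisely the inverse map you describe. Both your bijective argument and your supplementary generating-function derivation are correct; the latter is a nice bonus but not needed.
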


\proof Consider the labeled trees on $n+1$ vertices $\{0,1,2,\ldots,n\}$ with edges colored with the colors
$0,1,\ldots,m-1$. It is clear that there are $(n+1)^{n-1}m^n$ such trees. Each labeled tree can be considered as
a rooted tree with the root $0$. By deleting all the edges  away from the root, taking the children of $0$ as roots, and grouping the subtrees by the colors of the edges linked with $0$, we get a collection of rooted $m$-forests.
This completes the proof.
\qed

Following Yan \cite{yan2001}, let $B_n$ be the set of all sequences $(S_1, S_2, \ldots, S_q)$ of length $q$ such that
\begin{itemize}
\item[(1)] each $S_i$ is a rooted $p$-forest,

\item[(2)] $S_i$ and $S_j$ are disjoint if $i\neq j$, and

\item[(3)] the union of the vertex sets of $S_1, S_2, \ldots, S_a$ is $[n]$.
\end{itemize}

Alternatively, $B_n$ can be considered as the set of rooted forests on $[n]$ with root vertices colored with $0,1,\ldots q-1$
and non-root vertices colored with $0,1,\ldots,p-1$. The elements of
$B_n$ are called {\em rooted $(p,q)$-forests}.

As remarked by Yan, the cardinality of the set $B_n$ is $q(q+np)^{n-1}$. This result can be obtained by using a simple generalization of the Pr\"ufer code on rooted forests \cite[Chap. 5.3]{stanley1999}.

\begin{lemm} \label{key-lemma-e}  For any positive integers $n$, $p$,
  and $q$, we have
$$\sum_{j_1+j_2+\cdots+j_p=n} \!\!\!\!
\binom{n}{j_1,j_2,\ldots,j_p} \prod_{t=1}^p q(j_tp+q)^{j_t-1}= \!\!\!\!
\sum_{i_1+i_2+\cdots+i_q=n}
\!\!\!\!
 \binom{n}{i_1,i_2,\ldots,i_q}p^n\prod_{t=1}^q (i_t+1)^{i_t-1}.$$
\end{lemm}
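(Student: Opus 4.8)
The plan is to prove the identity by double counting. I would introduce the set $\mathcal{M}$ of all rooted forests on $[n]$ in which each tree is assigned one of $q$ colors (a \emph{tree-color}) and each of the $n$ vertices is assigned, independently, one of $p$ colors (a \emph{vertex-color}). Since a rooted forest on $[n]$ with exactly $k$ trees can be tree-colored in $q^k$ ways and vertex-colored in $p^n$ ways, we have $|\mathcal{M}|=p^n\sum_k R_{n,k}\,q^k$, where $R_{n,k}$ is the number of rooted forests on $[n]$ with $k$ trees. The goal is to show that each side of the claimed identity is a different organization of this same enumeration.

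For the right-hand side, I would group the elements of $\mathcal{M}$ by the ordered set partition of $[n]$ induced by the tree-colors: for each color $t\in\{0,\dots,q-1\}$ let $A_t$ be the union of the vertex sets of the trees colored $t$, so $(A_0,\dots,A_{q-1})$ is an ordered partition of $[n]$ with $|A_t|=i_t$. Once the sizes $i_t$ are fixed, the coefficient $\binom{n}{i_1,\dots,i_q}$ counts the choices of the blocks; the trees colored $t$ form an arbitrary rooted forest on $A_t$, contributing $(i_t+1)^{i_t-1}$ by the count of rooted forests recalled above; and the independent vertex-coloring contributes the global factor $p^n$. Summing over all compositions reproduces the right-hand side exactly.

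For the left-hand side, I would instead split the vertex-coloring according to its values on roots versus non-roots. Reading the vertex-color of each root as a \emph{block label} in $\{0,\dots,p-1\}$ partitions the trees, hence $[n]$, into $p$ ordered blocks $D_0,\dots,D_{p-1}$ with $|D_t|=j_t$, which the factor $\binom{n}{j_1,\dots,j_p}$ counts. The data remaining on a block $D_t$ is then a rooted forest on $D_t$ whose roots carry a color in $\{0,\dots,q-1\}$ (the tree-color) and whose non-root vertices carry a color in $\{0,\dots,p-1\}$ (the vertex-color), i.e.\ exactly a rooted $(p,q)$-forest on $D_t$; by Yan's count this contributes $q(j_tp+q)^{j_t-1}$, and summing over all compositions reproduces the left-hand side.

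Both enumerations count $\mathcal{M}$, so the identity follows. The step that requires care, and which is really the crux, is the bookkeeping of the $p^n$ factor: in the right-hand reading it is a single global vertex-coloring, whereas in the left-hand reading its $p^{(\#\,\mathrm{roots})}$ part is absorbed into the block labels of the $(p,q)$-forests while its $p^{(\#\,\mathrm{non\text{-}roots})}$ part becomes the non-root colors; one must verify these describe the same structure and that degenerate blocks $j_t=0$ or $i_t=0$ are handled by the conventions $q\cdot q^{-1}=1$ and $1^{-1}=1$. As an alternative, I could prove the equivalent generating-function identity $A(x)^p=B(px)^q$, where $A(x)=\sum_j q(jp+q)^{j-1}x^j/j!=e^{qy}$ with $y=xe^{py}$ and $B(x)=\sum_i (i+1)^{i-1}x^i/i!=e^{T(x)}$ with $T=xe^{T}$, the point being the Lagrange-inversion computation of $A$ and the substitution $T(px)=py$.
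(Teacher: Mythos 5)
Your proof is correct and is essentially the paper's own argument: both double-count the set of rooted forests on $[n]$ whose roots carry a color from $\{0,\dots,q-1\}$ and whose vertices carry a color from $\{0,\dots,p-1\}$ (your ``tree-color'' is just the paper's second root color), grouping once by the $q$-color and once by the $p$-color of the roots and invoking Yan's count $q(np+q)^{n-1}$ in the same place. The only cosmetic difference is that you aggregate the factor $p^n$ globally rather than as $\prod_t p^{i_t}$ per block.
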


\proof Let $A_n$ denote the set of rooted forests on $[n]$ with vertices colored with $0,1,\ldots,p-1$ and
root vertices also colored with $0,1,\ldots q-1$. (Namely, each root
will have a pair of two colors $(i,j)$, and each non-root will have
only one color $i$, for some $i\in \{0,1,\ldots, p-1\}$ and $j\in
\{0,1,\ldots, q-1\}$). There are two ways to count $A_n$:

(1) For each colored forest in $A_n$, we group trees according to the
first color of their roots. There are $\binom{n}{j_1,j_2,\ldots,j_p}$ ways to divide the vertex set $[n]$ into $p$ blocks $A_{n1},A_{n2},\ldots,A_{np}$ with respective size $j_1,j_2,\ldots,j_p$. Each
$A_{nt}$ is a rooted $(p,q)$-forest. By Yan's result, the number of
choices of $A_{nt}$ with size $j_t$ is given by
$q(j_tp+q)^{j_t-1}$. Thus the total number of objects in $A_n$ is
$$\sum_{j_1+j_2+\cdots+j_p=n}\binom{n}{j_1,j_2,\ldots,j_p} \prod_{t=1}^p q(j_tp+q)^{j_t-1}.$$

(2) For each colored forest in $A_n$, we group trees according to the
second color of their roots. There are $\binom{n}{j_1,j_2,\ldots,j_q}$ ways to divide the vertex set $[n]$ into $q$ blocks $A'_{n1},A'_{n2},\ldots,A'_{nq}$ with respective size $i_1,i_2,\ldots,i_q$. Each
$A'_{nt}$ is a rooted $(p,p)$-forest, which has
$p(i_tp+p)^{i_t-1}=p^{i_t}(i_t+1)^{i_t-1}$ choices. Thus the total number of objects in $A_n$ is
$$\sum_{i_1+i_2+\cdots+i_q=n} \binom{n}{i_1,i_2,\ldots,i_q}\prod_{t=1}^q p^i_t(i_t+1)^{i_t-1}.$$
% There are $p^n$ ways to color each vertex of $[n]$. Once each vertex
% is colored with $0,1,\ldots q-1$, according to the colors of root
% vertices colored with $0,1,\ldots q-1$, there are
% $\binom{n}{i_1,i_2,\ldots,i_q}$ ways to divide the vertex set $[n]$
% into $q$ blocks $B_{n1},B_{n2},\ldots,B_{nq}$ with respective size
% $i_1,i_2,\ldots,i_q$. Each $B_{nt}$ is a rooted forest on $i_t$
% vertices counted by $(i_t+1)^{i_t-1}$.
The identity follows since both sides count the same set $A_n$.
\qed

\section{Two elementary proofs of Theorem \ref{main}}

\subsection{A Combinatorial Proof}

% Given a positive integer $m$ and a real number $c$,
% define
% $$f(m,c)=\sum_{k=0}^{\infty} (km+1)^{k-1} c^k e^{-(km+1)c} / k!.$$

% \begin{theo}
% For any $m\in \mathbb{P}$ and $c\in\mathbb{R}$, we have
% \begin{align}
% f(m,c/m)^m=f(1,c).
% \end{align}
% \end{theo}

\proof % Note that
% \begin{align}
% f(m,c)^m & = \left(\sum_{k=0}^{\infty} \frac{(km+1)^{k-1} c^k}{m^k k!} e^{-(kc+c/m)} \right)^m\\[5pt]
%            & = \left(\sum_{k=0}^{\infty} \frac{(km+1)^{k-1} c^k}{m^kk!} e^{-kc} \right)^m e^{-c}\\[5pt]
%            & = \left(\sum_{k=0}^{\infty}\frac{(km+1)^{k-1}}{m^k k!}(ce^{-c})^k\right)^m e^{-c}.
% \end{align}
% We should be aware that the convergence of the following infinite series:
% \begin{align}
% g(m,x)&=\sum_{k=0}^{\infty} (km+1)^{k-1}/(m^k k!)x^k.
% \end{align}
% It is easy to show that $g(x)$ converges for $0<x<1/e$.
% To show that $f(m,c/m)^m=f(1,c)$, it suffices to show that $g(m,x)^m=g(1,x)$, namely
% \begin{align}
% \left(\sum_{k=0}^{\infty} (km+1)^{k-1}/(m^k k!)(x)^k\right)^m=\sum_{k=0}^{\infty} (k+1)^{k-1}/k!x^k.
% \end{align}
Note that $f(m,c)=e^{-c/m}g(m, ce^{-c})$. To show $f(m,c)^m=f(1,c)$,
it is sufficient to show $g(m,x)^m=g(1,x)$ for all $x\in [0,1/e]$, 
namely
\begin{align}
 \left(\sum_{k=0}^{\infty} (km+1)^{k-1}/(m^k k!)(x)^k\right)^m=\sum_{k=0}^{\infty} (k+1)^{k-1}/k!x^k.
 \end{align}

Taking the coefficients of $x^n$ on both sides, we obtain that
\begin{align}
\sum_{k_1+k_2+\cdots+k_m=n}\prod_{i=1}^m(k_im+1)^{k_i-1}/(m^{k_i}k_i!)=(n+1)^{n-1}/n!,
\end{align}
which can be written as
\begin{align}
\sum_{k_1+k_2+\cdots+k_m=n}
\binom{n}{k_1,k_2,\ldots,k_m}\prod_{i=1}^m(k_im+1)^{k_i-1}=(n+1)^{n-1}m^n.
\end{align}
By Lemma \ref{key-lemma}, the proof is complete.
\qed

\subsection{A Power-Series Proof}
Here we use the following  version of the
well-known Lagrange inversion formula \cite{Lagrange}:

\vspace{.1in}

\noindent
{\bf Lagrange inversion  formula}
\\
{\it Suppose that $z$ is a function of $x$ and $y$ in terms of another
analytic function $\phi$ as follows:
\[ z = x + y \phi(z). \]
Then $z$  can be written as a power series in $y$   as follows:
\[ z = x + \sum_{k=1}^{\infty} \frac{y^k}{k!} D^{(k-1)}
 \phi^k(x) \]
where $D^{(t)}$ denotes the $t$-th derivative.
}

\begin{lemm}\label{l2}
 The function $f(m,c)$ satisfies Equation \eqref{eq:3}.
\end{lemm}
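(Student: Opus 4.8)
The plan is to prove the relation $y^m=e^{-c(1-y^m)}$, i.e.\ Equation \eqref{eq:3}, for $y=f(m,c)$ by identifying $f(m,c)$ with the branch of an explicitly solvable functional equation and extracting its coefficients via Lagrange inversion. As in the convergence lemma of this section, first I would write $f(m,c)=e^{-c/m}g(m,x)$ with $x=ce^{-c}$, where $g(m,x)=\sum_{k\ge0}(km+1)^{k-1}x^k/(m^k k!)$, so that it suffices to understand the power series $g(m,x)$ on $[0,1/e]$. The key claim is that $u=g(m,x)$ is the unique analytic solution with $u(0)=1$ of
\[ u=e^{(x/m)u^m}. \]

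To establish the claim I would invoke the Lagrange inversion formula. Substituting $s=\log u$ turns the target equation into $s=\tfrac{x}{m}e^{ms}$, which is exactly of the required form $z=x_0+y\,\phi(z)$ with $x_0=0$, $y=x/m$, and $\phi(s)=e^{ms}$. Since $\phi^k(s)=e^{mks}$ gives $D^{(k-1)}\phi^k(0)=(mk)^{k-1}$, and since the quantity I actually want is $u=e^s$ rather than $s$, I would apply the inversion formula to the test function $F(s)=e^s$ (the Lagrange--B\"urmann form): here $F'(s)\phi^k(s)=e^{(mk+1)s}$, so $D^{(k-1)}[F'\phi^k](0)=(mk+1)^{k-1}$. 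This yields
\[ u=e^{s}=1+\sum_{k=1}^{\infty}\frac{(x/m)^k}{k!}(mk+1)^{k-1}=\sum_{k=0}^{\infty}\frac{(mk+1)^{k-1}}{m^k k!}x^k=g(m,x), \]
which proves the claim and hence that $g(m,x)=e^{(x/m)g(m,x)^m}$ on $[0,1/e]$.

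Finally I would translate back to $f(m,c)$. Writing $u=g(m,x)$ with $x=ce^{-c}$ and $f=f(m,c)=e^{-c/m}u$, one has $u^m=e^{c}f^m$, so $(x/m)u^m=(ce^{-c}/m)e^{c}f^m=(c/m)f^m$; substituting into $u=e^{(x/m)u^m}$ and multiplying by $e^{-c/m}$ gives $f=e^{-c/m}e^{(c/m)f^m}=e^{-c(1-f^m)/m}$, that is $f^m=e^{-c(1-f^m)}$, which is precisely Equation \eqref{eq:3}.

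I expect the only genuine subtlety to be a matter of validity rather than computation: Lagrange inversion is a statement about formal (or locally analytic) power series, so I must ensure that $g(m,x)$ really converges to and represents the intended analytic branch throughout $[0,1/e]$ — which is exactly what the convergence lemma provides — and that the normalization $u(0)=1$ singles out the correct root (the relevant one also for Equation \eqref{eq:1} when one later sets $z=y^m$). A secondary point is that the excerpt states only the basic form of the inversion formula; since I need $e^s$ rather than $s$, I would either record the standard Lagrange--B\"urmann extension to a test function $F$, or equivalently first recover $s=\log u=\sum_{k\ge1}k^{k-1}x^k/(m\,k!)$ from the basic form and then exponentiate and match coefficients.
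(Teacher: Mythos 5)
Your proof is correct, and it rests on the same tool as the paper's: Lagrange inversion applied to the exponential functional equation in order to identify the given power series with the solution branch. The setups differ in a way worth noting. The paper inverts $z=e^{-c/m}e^{cz^m/m}$ directly, taking the formal parameter to be $y=e^{-c/m}$ and $\phi(z)=e^{cz^m/m}$; the $(k-1)$-st derivative of $\phi^k$ at $0$ vanishes unless $k=mj+1$, and the surviving terms reproduce $f(m,c)$ exactly. This uses only the basic form of the inversion formula as quoted in the paper. Your route first passes to $g(m,x)$ with $x=ce^{-c}$, substitutes $s=\log u$ to get $s=(x/m)e^{ms}$, and then needs the Lagrange--B\"urmann extension with test function $F(s)=e^s$ to recover $u$ rather than $s$ --- a stronger statement than the one the paper records, though you correctly flag this and offer a workaround. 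What your version buys is a more transparent formal variable (an honest small parameter $x$ rather than $e^{-c/m}$, with $\phi$ independent of $c$) and a functional equation $g(m,x)=e^{(x/m)g(m,x)^m}$ stated directly for the series $g$ that the rest of the paper also uses; what the paper's version buys is that it needs no extension of the quoted formula. Your closing remarks on convergence on $[0,1/e]$ and on the normalization $u(0)=1$ selecting the right branch are exactly the right points of care, and in fact you are slightly more careful about them than the paper is.
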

\begin{proof}
 Write Equation \eqref{eq:3} as
$$z=e^{-c/m}e^{cz^m/m}.$$
Applying the Lagrange inversion  formula with
$x=0$, $y=e^{-c/m}$, and $\phi(z)=e^{cz^m/m}$.
Note that $\phi^k(z)=e^{kcz^m/m}=\sum_{j=0}^\infty(kc/m)^j z^{mj}/j!$.
Thus,
$$
D^{(k-1)}
 \phi^k(0)=
 \begin{cases}
 (k-1)!  (kc/m)^j/j! & \mbox{ if }k=mj+1\\
  0      &\mbox{ otherwise.}
 \end{cases}
$$
We have
\begin{align*}
z&=x + \sum_{k=1}^{\infty} \frac{y^k}{k!} D^{(k-1)}
 \phi^k(x)\\
&=\sum_{j=0}^\infty \frac{e^{-c(mj+1)/m}}{(mj+1)!} \frac{(mj)!
  ((mj+1)c/m)^j}{j!}\\
&=  \sum_{j=0}^\infty \frac{(mj+1)^{j-1}}{m^j j!}c^je^{-c(mj+1)/m}\\
&=f(m,c).
\end{align*}
\end{proof}

\begin{proof}{Proof of Theorem \ref{main}:}
Let $z=f(m,c)$.
By Lemma \ref{l2}, we have
$$z=e^{-(1-z^m)c/m}.$$
Thus
$$z^m=e^{-(1-z^m)c}.$$
Applying Lemma \ref{l2} again, we get
$$z^m=f(1,c).$$
\end{proof}

\section{Extending Theorem \ref{main} to Real $m$}
We will extend Theorem \ref{main} to real $m$ as follows.
\begin{theo}\label{real}
For any two positive reals $m$ and $c$, we have
$$f(m,c)^m=f(1,c).$$
\end{theo}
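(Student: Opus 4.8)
The plan is to reduce the claim for real $m$ to the power-series identity $g(m,x)^m=g(1,x)$ on the interval $[0,1/e]$, exactly as in the combinatorial proof of Theorem \ref{main}. Since $f(m,c)=e^{-c/m}g(m,ce^{-c})$, we have $f(m,c)^m=e^{-c}g(m,ce^{-c})^m$ while $f(1,c)=e^{-c}g(1,ce^{-c})$, so $f(m,c)^m=f(1,c)$ is equivalent to $g(m,ce^{-c})^m=g(1,ce^{-c})$; as $c$ ranges over $(0,\infty)$ the quantity $ce^{-c}$ ranges over $(0,1/e]$, so it suffices to prove $g(m,x)^m=g(1,x)$ for every real $m>0$ and every $x\in[0,1/e]$. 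My strategy is to first establish this for all \emph{rational} $m$ by means of Lemma \ref{key-lemma-e}, and then pass to arbitrary real $m$ by continuity.

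For the rational case, write $m=p/q$ with $p,q$ positive integers and reinterpret Lemma \ref{key-lemma-e} through exponential generating functions. Expanding the multinomial coefficients as ratios of factorials, the left-hand side of Lemma \ref{key-lemma-e} equals $n!\,[x^n]A(x)^p$ with $A(x)=\sum_{j\ge0}q(jp+q)^{j-1}x^j/j!$, and the right-hand side equals $n!\,[x^n]B(x)^q$ with $B(x)=\sum_{i\ge0}p^i(i+1)^{i-1}x^i/i!$. The key algebraic step is the rescaling $q(jp+q)^{j-1}=q^j(jp/q+1)^{j-1}$, which identifies $A(x)=g(p/q,px)$ and $B(x)=g(1,px)$. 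Since Lemma \ref{key-lemma-e} asserts that the coefficients of $x^n$ agree for every $n$, the two sides coincide as power series; equivalently, setting $u=px$, we get $g(p/q,u)^p=g(1,u)^q$ as power series in $u$, an identity that therefore holds pointwise for all $u\in[0,1/e]$ where both sides converge. Because every value $g(m,u)$ is at least $1$ (all terms are nonnegative and the constant term is $1$), I may take positive real roots to conclude $g(p/q,u)^{p/q}=g(1,u)$, which is the desired identity for $m=p/q$.

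Finally I would extend to real $m$ by continuity. The convergence lemma of Section~2 shows that $g(m,x)$ is continuous in $m$ on $[0,1/e]$, and since $g(m,x)\ge 1$, the map $m\mapsto g(m,x)^m$ is continuous on $(0,\infty)$ for each fixed $x$. This map agrees with the constant $g(1,x)$ on the dense set of positive rationals, so by continuity it equals $g(1,x)$ for every real $m>0$. Translating back through $f(m,c)=e^{-c/m}g(m,ce^{-c})$ then gives $f(m,c)^m=f(1,c)$.

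I expect the main obstacle to be the second step: correctly massaging Lemma \ref{key-lemma-e} into generating-function form and, in particular, spotting the rescaling $q(jp+q)^{j-1}=q^j(jp/q+1)^{j-1}$ together with the substitution $u=px$, which simultaneously turns the left side into a $p$-th power of $g(p/q,\cdot)$ and the right side into a $q$-th power of $g(1,\cdot)$. Once this reinterpretation is in hand, the rational identity is immediate and the continuity argument is routine.
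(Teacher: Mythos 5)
Your proposal is correct and follows essentially the same route as the paper: reduce to rational $m=p/q$ by continuity, recast $g(p/q,\cdot)^p=g(1,\cdot)^q$ as an equality of power-series coefficients, and invoke Lemma \ref{key-lemma-e}. The only differences are cosmetic — you run the lemma forward through exponential generating functions with the harmless substitution $u=px$, and you spell out the justification for extracting $q$-th roots (namely $g\geq 1$), which the paper leaves implicit.
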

\begin{proof}
Observe that $f(m,c)$ is continuous on $m$. It is sufficient to 
show that $f(m,c)^m=f(1,c)$ holds for rational $m>0$. Equivalently, it suffices to show that $g(m,x)^m=g(1,x)$, namely
\begin{align}
\left(\sum_{k=0}^{\infty} (km+1)^{k-1}/(m^k k!)(x)^k\right)^m=\sum_{k=0}^{\infty} (k+1)^{k-1}/k!x^k.
\end{align}

Suppose that $m=p/q$. Then
\begin{align}
\left(\sum_{k=0}^{\infty} (kp+q)^{k-1}q/(p^k k!)(x)^k\right)^p=\left(\sum_{k=0}^{\infty} (k+1)^{k-1}/k!x^k\right)^q.
\end{align}
Comparing the coefficients of $x^n$ on both sides, we obtain
$$\sum_{j_1+j_2+\cdots+j_p=n}\prod_{t=1}^p (j_tp+q)^{j_t-1}q/(p^{j_t} j_t!)=
\sum_{i_1+i_2+\cdots+i_q=n}\prod_{t=1}^q (i_t+1)^{i_t-1}/(i_t!),$$
or equivalently
$$\sum_{j_1+j_2+\cdots+j_p=n}\!\!\!\!
\binom{n}{j_1,j_2,\ldots,j_p} \prod_{t=1}^p q(j_tp+q)^{j_t-1}=\!\!\!\!
\sum_{i_1+i_2+\cdots+i_q=n} \!\!\!\!
 \binom{n}{i_1,i_2,\ldots,i_q}p^n\prod_{t=1}^q (i_t+1)^{i_t-1}.$$
By Lemma \ref{key-lemma-e}, this completes the proof.  
\end{proof}

%We also have
%$$
%\sum_{i_1+i_2+\cdots+i_q=n} \binom{n}{i_1,i_2,\ldots,i_q}p^n\prod_{t=1}^q (i_t+1)^{i_t-1}=
%\sum_{i_1+i_2+\cdots+i_q=n} \binom{n}{i_1,i_2,\ldots,i_q}p^{(n-q)}\prod_{t=1}^q (pi_t+1)^{i_t-1}$$

\end{document}